\newtheorem{theorem}{Theorem}[section]
\newtheorem{lemma}[theorem]{Lemma}
\theoremstyle{definition}
\newtheorem{definition}[theorem]{Definition}
\theoremstyle{remark}
\newtheorem{remark}[theorem]{Remark}
\numberwithin{equation}{section}
\let \la=\lambda
\let \e=\varepsilon
\let \o=\omega
\let \a=\alpha
\let \f=\varphi
\let \ga=\gamma
\begin{document}
\title[Coifman-Fefferman and Fefferman-Stein inequalities]
{A note on the Coifman-Fefferman and Fefferman-Stein inequalities}

\author{Andrei K. Lerner}
\address{Department of Mathematics,
Bar-Ilan University, 5290002 Ramat Gan, Israel}
\email{lernera@math.biu.ac.il}

\thanks{The author was supported by ISF grant No. 447/16 and ERC Starting Grant No. 713927.}

\begin{abstract}
A condition on a Banach function space $X$ is given under which the
Coifman-Fefferman and Fefferman-Stein inequalities on $X$ are equivalent.
\end{abstract}

\keywords{Coifman-Fefferman inequality, Fefferman-Stein inequality, Banach function spaces.}
\subjclass[2010]{42B20, 42B25}

\maketitle

\section{Introduction}
R. Coifman and C. Fefferman \cite{C,CF} proved that
\begin{equation}\label{CF}
\|Tf\|_{L^p(w)}\le C\|Mf\|_{L^p(w)},
\end{equation}
where $T$ is a singular integral operator and $M$ is the Hardy-Littlewood maximal operator. Approximately in the same period,
C. Fefferman and E. Stein \cite{FS} established that
\begin{equation}\label{FS}
\|Mf\|_{L^p(w)}\le C\|f^{\#}\|_{L^p(w)},
\end{equation}
where $f^{\#}$ is the sharp maximal operator.

Inequalities (\ref{CF}) and (\ref{FS}) play an important role in harmonic analysis. However,
the problem of characterizing the weights $w$ for which these inequalities hold is still open for both (\ref{CF}) and (\ref{FS}).
It is known that the so-called $C_p$ condition is necessary for (\ref{CF}) and (\ref{FS}), and $C_{p+\e},\e>0,$ is sufficient, see \cite{S,Y}.

The weak type versions of (\ref{CF}) and (\ref{FS}) (with the $L^p(w)$-norm replaced by the $L^{p,\infty}(w)$-norm on the left-hand side)
have been recently characterized in \cite{L6} by a uniform condition (denoted by $SC_p$).
In particular, this means that the weak type versions of (\ref{CF}) and (\ref{FS}) are equivalent.

Therefore, it is natural to conjecture that inequalities (\ref{CF}) and (\ref{FS}) are equivalent as well (of course when (\ref{CF}) is considered
on some natural subclass of non-degenerate singular integral operators). We investigate this question in a
general context of Banach function spaces (BFS) $X$ over ${\mathbb R}^n$ equipped with Lebesque measure.

Our main result says that the Coifman-Fefferman and Fefferman-Stein inequalities on $X$ are equivalent if
\begin{equation}\label{cond}
\|MMf\|_X\le C\|Mf\|_X.
\end{equation}
This condition is definitely superfluous if $X=L^p(w)$. However, we do not know if it can be removed (or at least weakened), in general.

In what follows, it will be convenient to introduce the space $MX$ equipped with norm
$$\|f\|_{MX}=\|Mf\|_{X}.$$
Then (\ref{cond}) means that the maximal operator $M$ is bounded on $MX$.

Denote by ${\mathfrak M}$ a family of BFS $X$ such that $MX$ is also BFS. It is easy to show (see Lemma \ref{pr} below) that
${\mathfrak M}$ consists of those $X$ for which $\f(x)=\frac{1}{1+|x|^n}\in X$.

Define the standard Riesz and the maximal Riesz transforms by
$$R_jf(x)=\lim_{\e\to 0}R_{j,\e}f(x)\quad\text{and}\quad R_j^{\star}f(x)=\sup_{\e>0}|R_{j,\e}f(x)|,$$
respectively, where
$$R_{j,\e}f(x)=c_n\int_{|y|>\e}f(x-y)\frac{y_j}{|y|^{n+1}}dy\quad(j=1,\dots,n).$$

Our main result is the following.

\begin{theorem}\label{mr}
Let $X$ be a BFS. Assume that $X\in {\mathfrak M}$ and that $M$ is bounded on $MX$. Then the following statements are equivalent.
\begin{enumerate}
\renewcommand{\labelenumi}{(\roman{enumi})}
\item
The Fefferman-Stein inequality
$$\|Mf\|_X\le C\|f^{\#}\|_X$$
holds.
\item
The Coifman-Fefferman inequality
$$\|R_j^{\star}f\|_X\le C\|Mf\|_{X}$$
holds for each of the maximal  Riesz transforms $R_j^{\star}, j=1,\dots, n$.
\end{enumerate}
\end{theorem}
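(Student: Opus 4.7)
The implication (i)$\Rightarrow$(ii) is the routine direction. It uses the classical pointwise sharp-function estimate for maximal Calder\'on--Zygmund operators, $(R_j^\star f)^\#(x)\le CMf(x)$, combined with the Lebesgue differentiation bound $|g(x)|\le Mg(x)$ a.e. Applying (i) to $R_j^\star f$ and using monotonicity of the $X$-norm,
\[
\|R_j^\star f\|_X \;\le\; \|M(R_j^\star f)\|_X \;\le\; C\,\|(R_j^\star f)^\#\|_X \;\le\; C\,\|Mf\|_X.
\]

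The substantive direction is (ii)$\Rightarrow$(i), and I plan to build a bridge from $R_j^\star$ back to the sharp function via the Riesz identity $f=-c_n\sum_j R_j^2 f$. A first application of Cotlar's inequality $M(R_jg)\le C(R_j^\star g+Mg)$ together with (ii) gives $\|M(R_jg)\|_X\le C\|Mg\|_X$ for all $g$, and iterating produces
\[
\|Mf\|_X \;\le\; C\sum_{j=1}^n\|M(R_j^2 f)\|_X \;\le\; C\sum_{j=1}^n\|R_j^\star(R_j f)\|_X + C\sum_{j=1}^n\|M(R_j f)\|_X \;\le\; C\|Mf\|_X,
\]
which by itself is circular and never sees $f^\#$. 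To break the circle, I would decompose $f$ via a Calder\'on--Zygmund stopping time at an adapted level: $f=g+b$, with the good part satisfying $\|Mg\|_X\le C\|f^\#\|_X$ and the bad part a sum of atoms with vanishing mean on their defining cubes. Because each bad atom has zero mean, its Riesz transforms $R_j^\star$ decay off the support with $L^1$-bound proportional to the atom's mass, allowing one to estimate $\sum_j\|R_j^\star b\|_X$ by a small multiple of $\|MMf\|_X$. The hypothesis (\ref{cond}), i.e.\ $\|MMf\|_X\le C\|Mf\|_X$, then converts this into a small multiple of $\|Mf\|_X$, and absorption yields $\|Mf\|_X\le C\|f^\#\|_X$.

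The principal obstacle is arranging the Calder\'on--Zygmund decomposition so that the Riesz-transform estimate for the bad part carries a sufficiently small absorption constant; this is the BFS-analogue of the good-$\lambda$ step in the weighted Fefferman--Stein argument, and it is precisely here that the hypothesis $MX$-boundedness of $M$ becomes essential. Without (\ref{cond}) the iteration loses a factor at each step and cannot be summed, which is consistent with the author's remark that (\ref{cond}) appears to be needed in the abstract BFS setting while being dispensable in $L^p(w)$, where $A_\infty$ replaces the absorption mechanism.
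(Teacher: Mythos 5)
Your direction (i)$\Rightarrow$(ii) rests on the pointwise bound $(R_j^{\star}f)^{\#}(x)\le CMf(x)$, which is not available: for Calder\'on--Zygmund operators only the local sharp bound $M_{\la}^{\#}(R_j^{\star}f)(x)\le CMf(x)$ holds, and it upgrades to the full sharp function only as $(R_j^{\star}f)^{\#}(x)\le CMMf(x)$ (this is (\ref{pot}) in the paper). That is exactly why the hypothesis that $M$ is bounded on $MX$, i.e. $\|MMf\|_X\le C\|Mf\|_X$, is used even in this ``easy'' direction; with the corrected estimate your three-line computation goes through, so this part is repairable.

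The substantive direction (ii)$\Rightarrow$(i) has a genuine gap: your Calder\'on--Zygmund decomposition scheme never actually escapes the circularity you acknowledge. The key claims --- that one can choose an ``adapted level'' so that the good part satisfies $\|Mg\|_X\le C\|f^{\#}\|_X$, and that the bad part contributes only ``a small multiple'' of $\|MMf\|_X$ --- are precisely the good-$\la$/absorption statements that are unavailable here: there is no identified small parameter in a general BFS (in $L^p(w)$ this is where $A_\infty$ or $C_p$ enters), and absorption would in any case require the a priori finiteness of $\|Mf\|_X$, which you cannot assume for the functions to which the Fefferman--Stein inequality must apply. The paper proceeds by an entirely different, duality-based route, none of whose ingredients appear in your sketch: first, the Fefferman--Stein inequality on $X$ is shown to be equivalent to the boundedness of $M$ on $(MX)'$ (Theorem \ref{fsc}, proved with de la Torre's linearization ${\mathcal M}_f$ of the maximal operator and the sharp-function bound for its adjoint); second, the pointwise estimate $M(R_j^{\star}f)\le C_{n,T}MMf+R_j^{\star}f$ (Lemma \ref{pt}), combined with (ii) and the boundedness of $M$ on $MX$, shows that each $R_j^{\star}$, hence each $R_j$, is bounded on $MX$; third, a Rutsky-type theorem (Lemma \ref{sr}, proved via subharmonicity of $|F|^q$ for the Poisson extension of $(R_1\varphi,\dots,R_n\varphi,\varphi)$ and a Rubio de Francia iteration) yields that $M$ is bounded on $(MX)'$, which closes the loop. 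Without these duality ingredients, or a genuinely new substitute for the missing good-$\la$ mechanism, your proposed decomposition argument cannot be completed.
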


The implication $(\rm{i})\Rightarrow(\rm{ii})$ here is easy; in fact, (i) along with the boundedness of $M$ on $MX$ implies that for every Calder\'on-Zygmund operator $T$,
$$\|T^{\star}f\|_X\le C\|Mf\|_{X}.$$

A non-trivial part of Theorem \ref{mr} is the implication $(\rm{ii})\Rightarrow(\rm{i})$. Here we essentially use a recent interesting result of D. Rutsky \cite{R} saying that for a BFS $X$,
the boundedness of every Riesz transform $R_{j}$ on $X$ is equivalent to the boundedness of $M$ on $X$ and $X'$, where $X'$ is the space associate to $X$.

A sketch of the proof of $(\rm{ii})\Rightarrow(\rm{i})$ is as follows.
First we show that the Fefferman-Stein inequality on $X$ is equivalent to the boundedness of $M$ on $(MX)'$, which is a slight refinement of an earlier characterization established in \cite{L2}.
Next, we obtain a pointwise estimate for the composition of $M$ with the maximal Calder\'on-Zygmund operator, which along with (ii) and the boundedness of $M$ on $MX$ implies that $R_j^{\star}$, and so $R_j,j=1,\dots n,$ are bounded on $MX$.
It remains to apply the above-mentioned result of D. Rutsky~\cite{R} to $MX$ instead of $X$.

Note that we will not need to use this result in full strength since the boundedness of $M$ on $MX$ is assumed. Therefore, in order to keep the paper essentially self-contained,
we give a simple proof of the fact that if $M$ and the Riesz transforms $R_j$ are bounded on $X$, then $M$ is bounded on $X'$.

Our method of the proof of $(\rm{ii})\Rightarrow(\rm{i})$ does not allow to replace the maximal Riesz
transforms $R_j^{\star}$ by the usual Riesz transforms $R_j$ in the statement of (ii). Also, it is an interesting question how to extend the implication
$(\rm{ii})\Rightarrow(\rm{i})$ to a more general class of singular integrals.

The paper is organized as follows. In Section 2, we recall some notions related to Banach function spaces and consider the space $MX$.
In Section 3, we obtain some pointwise estimates for Calder\'on-Zygmund operators. Section 4 contains a characterization of the Fefferman-Stein inequality.
Finally, in Section 5, we prove Theorem \ref{mr}.

\section{Banach function spaces and the space $MX$}
Denote by ${\mathcal M}^+$ the set of Lebesgue measurable non-negative functions on ${\mathbb R}^n$.

\begin{definition}\label{BFS}
By a Banach function space (BFS) $X$ over ${\mathbb R}^n$ equipped with Lebesque measure we mean a collection of functions $f$ such
that
$$\|f\|_{X}=\rho(|f|)<\infty,$$
where $\rho: {\mathcal M}^+\to [0,\infty]$ is a mapping satisfying
\begin{enumerate}
\renewcommand{\labelenumi}{(\roman{enumi})}
\item
$\rho(f)=0\Leftrightarrow f=0$ a.e.; $\rho(\a f)=\a \rho(f), \a\ge 0$;

\noindent
$\rho(f+g)\le \rho(f)+\rho(g)$;
\item $g\le f\,\,\text{a.e.}\,\,\Rightarrow \rho(g)\le \rho(f)$;
\item $f_n\uparrow f \,\,\text{a.e.}\,\,\Rightarrow \rho(f_n)\uparrow \rho(f)$;
\item if $E\subset {\mathbb R}^n$ is bounded, then $\rho(\chi_E)<\infty$;
\item if $E\subset {\mathbb R}^n$ is bounded, then $\int_Efdx\le c_E\rho(f)$.
\end{enumerate}
\end{definition}

A more common requirement is that $E$ is a set of finite measure in (iv) and (v) (see, e.g., \cite{BS}). However,
it is well known that all main elements of a general theory work with (iv) and (v) stated for bounded sets
(see, e.g., \cite{Lux}). In particular, if $X$ is a BFS, then the associate space $X'$ consisting of $f$ such that
$$\|f\|_{X'}=\sup_{g\in X:\|g\|_{X}\le 1}\int_{{\mathbb R}^n}|fg|\,dx<\infty$$
is also a BFS.

Recall that the Hardy-Littlewood maximal operator $M$ is defined by
$$Mf(x)=\sup_{Q\ni x}\frac{1}{|Q|}\int_Q|f(y)|dy,$$
where the supremum is taken over all cubes $Q$ containing $x$.

\begin{definition}\label{MX}
Assume that $X$ is a BFS. Denote by $MX$ the space of functions $f$ such that
$$\|f\|_{MX}=\|Mf\|_{X}<\infty.$$
\end{definition}

\begin{lemma}\label{pr} Assume that $X$ is a BFS. Then $MX$ is a BFS if and only if
\begin{equation}\label{ns}
\frac{1}{1+|x|^n}\in X.
\end{equation}
\end{lemma}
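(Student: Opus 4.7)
The plan is to verify the two directions of the equivalence separately.

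For the forward implication, assume $MX$ is a BFS and apply axiom (iv) of Definition~\ref{BFS} to $MX$ with the bounded set $E = B(0,1)$. This gives $\|\chi_{B(0,1)}\|_{MX} < \infty$, i.e.\ $M\chi_{B(0,1)} \in X$. A direct geometric estimate then yields $M\chi_{B(0,1)}(x) \gtrsim 1/(1+|x|^n)$: the Lebesgue differentiation theorem provides $M\chi_{B(0,1)} \ge 1$ a.e.\ on $B(0,1)$, while for $|x|>1$ the cube $Q = [-|x|-1,|x|+1]^n$ contains both $x$ and $B(0,1)$, giving $M\chi_{B(0,1)}(x) \ge |B(0,1)|/(2|x|+2)^n$. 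The lattice axiom (ii) for $X$ then forces $1/(1+|x|^n) \in X$.

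For the converse, assume (\ref{ns}) and verify axioms (i)--(v) of Definition~\ref{BFS} for $\rho(f) = \|Mf\|_X$. The norm axioms in (i) follow from the positive homogeneity and sublinearity of $M$, combined with the a.e.\ inequality $Mf \ge |f|$ (which yields $\rho(f) = 0 \Rightarrow f = 0$). Axiom (ii) is immediate from the monotonicity $0 \le g \le f \Rightarrow Mg \le Mf$. For (iii), monotone convergence applied to each cube average gives $Mf_n \uparrow Mf$ whenever $0 \le f_n \uparrow f$, after which axiom (iii) for $X$ yields $\rho(f_n) \uparrow \rho(f)$.

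The two substantive points are (iv) and (v). For (iv), if $E \subset B(0,R)$ is bounded, a simple split---trivial bound $M\chi_E \le 1$ for $|x| \le 2R$, and a single-cube estimate $M\chi_E(x) \le C|E|/|x|^n$ for $|x|>2R$---yields $M\chi_E(x) \le C_E/(1+|x|^n)$, so $\|\chi_E\|_{MX} < \infty$ by the lattice property of $X$ together with (\ref{ns}). For (v), choose a cube $Q$ containing the bounded set $E$; for every $x \in Q$ one has $\int_E |f| \le \int_Q |f| \le |Q| Mf(x)$, hence $\int_E |f| \le \int_Q Mf \le c_Q \|Mf\|_X$ by axiom (v) for $X$.

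The main (and only) obstacle is producing the correct pointwise comparison $M\chi_E(x) \asymp 1/(1+|x|^n)$ for bounded $E$ of positive measure, but this reduces to elementary cube selection. No machinery beyond the BFS axioms and the basic properties of $M$ is required.
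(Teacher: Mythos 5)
Your proof is correct and follows essentially the same route as the paper: the forward direction via axiom (iv) for $MX$ and the lower pointwise bound $M\chi_E(x)\gtrsim (1+|x|^n)^{-1}$, and the converse by verifying axioms (i)--(v) for $\rho'(f)=\|Mf\|_X$, with the upper bound $M\chi_E(x)\lesssim (1+|x|^n)^{-1}$ handling (iv). The only variation is in axiom (iii), where your interchange of suprema after monotone convergence on each cube average is a slightly cleaner substitute for the paper's Fatou-plus-$\varepsilon$ argument, and it is equally valid.
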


\begin{proof} We will use the standard fact that for every bounded set $E\subset {\mathbb R}^n$ of positive measure, there exist $C_1,C_2>0$
such that for all $x\in {\mathbb R}^n$,
\begin{equation}\label{two}
\frac{C_1}{1+|x|^n}\le M\chi_E(x)\le \frac{C_2}{1+|x|^n}.
\end{equation}

Assume that $MX$ is a BFS. Then by property (iv) of Definition \ref{BFS}, $\|M\chi_E\|_{X}<\infty$. Thus,
(\ref{ns}) follows from the left-hand side of (\ref{two}).

Assume now that (\ref{ns}) holds. Suppose that $\|f\|_{X}=\rho(|f|)$, where $\rho$ satisfies properties (i)-(v) of Definition \ref{BFS}.
Then $MX$ is a $BFS$ if
$$\rho'(f)=\rho(Mf)$$
satisfies these properties as well. Properties (i), (ii) and (v) are obvious. Next, (iv) follows from the right-hand side of (\ref{two}).

Finally, (iii) follows from the fact that if $f_n\uparrow f \,\,\text{a.e.}$, then $Mf_n \uparrow Mf$ everywhere.
Indeed, clearly $Mf_n$ is increasing and $Mf_n\le Mf$. Assume, for example, that $Mf(x)<\infty$. Let $\e>0$.
Take a cube $Q$ containing $x$ such that $Mf(x)<f_Q+\e$, where $f_Q=\frac{1}{|Q|}\int_Qf$. By Fatou's lemma, there exists $N$ such that for all $n\ge N$,
$$f_Q\le (f_n)_Q+\e\le Mf_n(x)+\e,$$
and hence $Mf(x)<Mf_n(x)+2\e$, which proves that $Mf_n(x)\to Mf(x)$ as $n\to \infty$. The case when $Mf(x)=\infty$ is similar.
\end{proof}

\section{Calder\'on-Zygmund operators}
Although Theorem \ref{mr} deals only with the Riesz transforms, some estimates we will use hold for general Calder\'on-Zygmund operators.

\begin{definition}\label{cz}
We say that $T$ is a Calder\'on-Zygmund operator with Dini-continuous kernel if $T$ is linear, $L^2$ bounded,
represented as
$$Tf(x)=\int_{{\mathbb R}^n}K(x,y)f(y)dy\quad\text{for all}\,\,x\not\in \text{supp}\,f$$
with kernel $K$ satisfying the size condition
$|K(x,y)|\le \frac{C_K}{|x-y|^n},x\not=y,$ and the smoothness condition
$$|K(x,y)-K(x',y)|+|K(y,x)-K(y,x')|\le \o\left(\frac{|x-x'|}{|x-y|}\right)\frac{1}{|x-y|^n}$$
for $|x-y|>2|x-x'|$,
where $\o$ is an increasing and subadditive on $[0,1]$ function such that $\int_0^1\o(t)\frac{dt}{t}<\infty.$
\end{definition}

We associate with $T$ the grand maximal truncated operator ${\mathcal M}_T$ and the usual maximal truncated operator $T^{\star}$ defined by
$${\mathcal M}_Tf(x)=\sup_{Q\ni x}\|T(f\chi_{{\mathbb R}^n\setminus 3Q})\|_{L^{\infty}(Q)}$$
and
$$T^{\star}f(x)=\sup_{\e>0}|T_{\e}f(x)|,$$
respectively, where $T_{\e}f(x)=\int_{|y-x|>\e}K(x,y)f(y)dy.$

It was shown in \cite{L4} that for every Calder\'on-Zygmund operator $T$ with Dini-continuous kernel,
\begin{equation}\label{mtr}
{\mathcal M}_Tf(x)\le C_{n,T}Mf(x)+T^{\star}f(x)
\end{equation}
for all $x\in {\mathbb R}^n$.
Exactly the same proof (replacing first $T$ by $T_{\e}$)  shows that $T$ on the left-hand side of (\ref{mtr}) can be replaced by $T^{\star}$, namely for all
$x\in {\mathbb R}^n$,
\begin{equation}\label{mtr1}
{\mathcal M}_{T^{\star}}f(x)\le C_{n,T}Mf(x)+T^{\star}f(x).
\end{equation}

\begin{lemma}\label{pt}
For every Calder\'on-Zygmund operator $T$ with Dini-continuous kernel and for all $x\in {\mathbb R}^n$,
$$M(T^{\star}f)(x)\le C_{n,T}MMf(x)+T^{\star}f(x).$$
\end{lemma}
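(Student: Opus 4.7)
Fix $x \in {\mathbb R}^n$ and any cube $Q$ containing $x$. The plan is to bound the average $\frac{1}{|Q|}\int_Q T^{\star}f\,dy$ by $C_{n,T}MMf(x) + T^{\star}f(x)$, then pass to the supremum over $Q \ni x$. Splitting $f = f\chi_{3Q} + f\chi_{{\mathbb R}^n\setminus 3Q}$ and using the sublinearity of $T^{\star}$ yields, for every $y \in Q$,
$$T^{\star}f(y) \le T^{\star}(f\chi_{3Q})(y) + T^{\star}(f\chi_{{\mathbb R}^n\setminus 3Q})(y).$$

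The ``far'' term is controlled directly by (\ref{mtr1}): since $x \in Q$, the definition of ${\mathcal M}_{T^{\star}}$ gives $\|T^{\star}(f\chi_{{\mathbb R}^n\setminus 3Q})\|_{L^{\infty}(Q)} \le {\mathcal M}_{T^{\star}}f(x) \le C_{n,T}Mf(x) + T^{\star}f(x)$. Combined with $Mf(x) \le MMf(x)$ a.e.\ (Lebesgue differentiation applied to $Mf$, which we may assume locally integrable, for otherwise $MMf \equiv \infty$ and the claim is trivial), this contributes at most $C_{n,T}MMf(x) + T^{\star}f(x)$ to the average over $Q$.

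The ``close'' term $\frac{1}{|Q|}\int_Q T^{\star}(f\chi_{3Q})\,dy$ is the main obstacle, and I need to bound it by a constant multiple of $MMf(x)$. A naive weak-$(1,1)$ bound for $T^{\star}$ is insufficient since the tail integral $\int ds/s$ diverges logarithmically; using only $L^p$-boundedness for some $p>1$ only yields control by the $L^p$-maximal function $M_pf(x)$, which in general is not dominated by $MMf(x)$. My plan is therefore to interpolate the weak-$(1,1)$ and $L^p$ ($p>1$) bounds of $T^{\star}$ in the Orlicz scale, obtaining the endpoint local estimate
$$\frac{1}{|Q|}\int_Q T^{\star}(g\chi_{3Q})\,dy \le C\|g\|_{L\log L,\,3Q},$$
for every $g \in {\mathcal M}^+$, where $\|\cdot\|_{L\log L,\,3Q}$ is the Luxemburg Orlicz average associated with $\Phi(t) = t(1 + \log^+ t)$. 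The classical P\'erez equivalence $M_{L\log L}f(x) \le c\,MMf(x)$, applied at $x \in 3Q$, then gives $\|f\|_{L\log L,\,3Q} \le c\,MMf(x)$, which closes the close-part estimate. Summing the two contributions and taking $\sup_{Q \ni x}$ yields the claimed inequality; the whole argument rests on this Orlicz endpoint bound plus (\ref{mtr1}).
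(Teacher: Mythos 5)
Your proposal is correct and follows essentially the same route as the paper: the same splitting $f=f\chi_{3Q}+f\chi_{{\mathbb R}^n\setminus 3Q}$, the estimate (\ref{mtr1}) for the far part, and for the local part the $L\log L$ endpoint bound for $T^{\star}$ (the paper obtains it via interpolation with Yano's extrapolation, which is exactly your Orlicz-scale interpolation) combined with $\|f\|_{L\log L,3Q}\le c\,MMf(x)$. The only cosmetic difference is your appeal to Lebesgue differentiation for $Mf(x)\le MMf(x)$, which in fact holds at every point by lower semicontinuity of $Mf$.
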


\begin{proof} A variant of this estimate was obtained in \cite{L5}, and therefore we outline the proof briefly.
For every cube $Q$ containing the point $x$,
\begin{equation}\label{spl}
\frac{1}{|Q|}\int_Q
T^{\star}f\le \frac{1}{|Q|}
\int_QT^{\star}(f\chi_{3Q})
+\|T^{\star}(f\chi_{{\mathbb R}^n\setminus 3Q})\|_{L^{\infty}(Q)}.
\end{equation}

Since $T^{\star}$ is of weak type $(1,1)$ and $L^2$ bounded, interpolation along with Yano's extrapolation shows that the
first part on the right-hand side of (\ref{spl}) is controlled by $\|f\|_{L\log L(3Q)}$, which in turn is controlled by $MMf(x)$.
For the second part of (\ref{spl}) we use (\ref{mtr1}).
\end{proof}

\begin{remark}\label{blo}
Lemma \ref{pt} implies the well-known fact \cite{Le} that for every $f\in L^p\cap L^{\infty}$,
$$\|T^{\star}f\|_{BLO}\le C_{n,T}\|f\|_{L^{\infty}},$$
where $\|f\|_{BLO}=\|Mf-f\|_{L^{\infty}}$.
For usual (non-maximal) Calder\'on-Zygmund operators $T$ only a weaker property that $\|Tf\|_{BMO}\le C\|f\|_{L^{\infty}}$
holds, and by this reason $T^{\star}$ cannot be replaced by $T$ in the statement of Lemma \ref{pt}.
\end{remark}

Define the sharp maximal function $f^{\#}$ and the local sharp maximal function $M_{\la}^{\#}f$ respectively by
$$f^{\#}(x)=\sup_{Q\ni x}\frac{1}{|Q|}\int_Q|f-f_Q|dy$$
and
$$M_{\la}^{\#}f(x)=\sup_{Q\ni x}\inf_c\big((f-c)\chi_Q\big)^*\big(\la|Q|\big)\quad(0<\la<1),$$
where $f^*$ denotes the non-increasing rearrangement of $f$.

It was shown in \cite{JT} that for all $x\in {\mathbb R}^n$,
$$
C_1MM^{\#}_{\la}f(x)\le f^{\#}(x)\le C_2MM^{\#}_{\la}f(x) \quad(0<\la\le 1/2)
$$
and for every Calder\'on-Zygmund operator $T$ with Dini-continuous kernel,
$$
M_{\la}^{\#}(T^{\star}f)(x)\le CMf(x)
$$
(actually the latter estimate was proved in \cite{JT} for $T$ instead of $T^{\star}$ but the proof for $T^{\star}$ is essentially the same).
A combination of these two estimates yields
\begin{equation}\label{pot}
(T^{\star}f)^{\#}(x)\le CMMf(x).
\end{equation}

\section{A characterization of the Fefferman-Stein inequality}
Consider the Fefferman-Stein inequality
\begin{equation}\label{fs}
\|Mf\|_{X}\le C\|f^{\#}\|_X.
\end{equation}

It was shown in \cite{L2} that (\ref{fs}) is actually equivalent to the same estimate but with $Mf$ replaced by $f$ on the left-hand side, namely, (\ref{fs}) holds if and only if
\begin{equation}\label{fs1}
\|f\|_{X}\le C\|f^{\#}\|_X.
\end{equation}

Also, it was shown in \cite{L2} that (\ref{fs}) holds if and only if
$$
\int_{{\mathbb R}^n}(Mf)g\,dx\le C\|f\|_{X'}\|Mg\|_{X}.
$$
This estimate can be rewritten in the form
\begin{equation}\label{fs2}
\|Mf\|_{(MX)'}\le C\|f\|_{X'}.
\end{equation}

Here we show that essentially the same proof as in \cite{L2} yields a bit more precise version of (\ref{fs2}) with
$\|f\|_{X'}$ on the right-hand side replaced by a smaller expression $\|f\|_{(MX)'}$.

\begin{theorem}\label{fsc} Let $X$ be a Banach function space such that $X\in {\mathfrak M}$. The Fefferman-Stein inequality (\ref{fs}) holds
if and only if the maximal operator $M$ is bounded on $(MX)'$.
\end{theorem}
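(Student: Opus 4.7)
My approach mirrors the argument of \cite{L2} for (\ref{fs2}), with a refinement of the duality step that replaces $\|f\|_{X'}$ by the smaller norm $\|f\|_{(MX)'}$.

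\smallskip
\noindent\textbf{Step 1 (reformulation).} By (\ref{fs1}), the Fefferman--Stein inequality (\ref{fs}) is equivalent to $\|f\|_X \le C\|f^{\#}\|_X$. On the other side, since $X\in {\mathfrak M}$ the space $MX$ is a BFS, and using $\|g\|_{MX}=\|Mg\|_X$ the definition of the associate space gives
$$
\|Mf\|_{(MX)'}=\sup\Bigl\{\int_{{\mathbb R}^n}Mf\cdot g\,dx\,:\,g\ge 0,\ \|Mg\|_X\le 1\Bigr\}.
$$
Hence the boundedness of $M$ on $(MX)'$ is equivalent to the integral inequality
$$
\int_{{\mathbb R}^n}Mf\cdot g\,dx\le C\|f\|_{(MX)'}\|Mg\|_X\qquad (f,g\ge 0).
$$
The theorem thereby reduces to the equivalence of (\ref{fs}) with this integral inequality.

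\smallskip
\noindent\textbf{Step 2 (the equivalence).} The proof of (\ref{fs2}) in \cite{L2} already establishes exactly such an equivalence, but with $\|f\|_{X'}$ in place of $\|f\|_{(MX)'}$. The ``$\Leftarrow$'' direction (integral inequality implies FS) is the easier one: one tests against specific $g$ built from $f^{\#}$ and uses BFS duality together with (\ref{fs1}); since the refined integral inequality is a priori stronger (as $\|f\|_{(MX)'}\le\|f\|_{X'}$), this direction is immediate from \cite{L2}. The ``$\Rightarrow$'' direction is where the refinement lives: assuming FS, one rewrites $\int Mf\cdot g$ via a Calder\'on-Zygmund / stopping-time decomposition controlled by $Mg$, and after applying the equivalent form $\|h\|_X\le C\|h^{\#}\|_X$ to a suitable $h$ built from $g$, one is led to pair $f$ against a function that is pointwise dominated by $Mg$. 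The key observation -- which refines \cite{L2} -- is that this pairing naturally fits H\"older's inequality in the couple $(MX,(MX)')$ rather than in $(X,X')$, producing $\|f\|_{(MX)'}\|Mg\|_X$ on the right.

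\smallskip
\noindent\textbf{Main obstacle.} The whole content of the refinement lies in verifying that the output of the decomposition step in \cite{L2} is, indeed, integrated against $f$ via a function bounded pointwise by (a multiple of) $Mg$; once this is checked, the Hölder pairing with $MX$ closes the argument. I therefore expect that the bulk of the writing is simply a careful rereading of the proof in \cite{L2} confirming this pointwise domination, with no new nontrivial idea required.
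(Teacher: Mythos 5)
Your easy direction and the reduction of ``$M$ bounded on $(MX)'$'' to the integral inequality $\int Mf\cdot g\,dx\le C\|f\|_{(MX)'}\|Mg\|_X$ are fine and agree with the paper. The problem is the hard direction, which you essentially defer to ``a careful rereading of \cite{L2}'' plus a claimed pointwise domination, and that claim is both unsubstantiated and, as stated, insufficient. The actual mechanism (in the paper, and essentially in \cite{L2}) is a linearization of the maximal operator due to A.~de la Torre \cite{T}: for each $f$ there is a \emph{linear} operator ${\mathcal M}_f$ with $Mf\simeq {\mathcal M}_ff$ pointwise and with the sharp-function bound $({\mathcal M}_f^{*}g)^{\#}\le CMg$ for the adjoint. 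One then writes $\int (Mf)g\le C\int f\,{\mathcal M}_f^{*}g$, applies H\"older in the couple $(MX,(MX)')$ to get $\|f\|_{(MX)'}\|M({\mathcal M}_f^{*}g)\|_X$, applies the Fefferman--Stein hypothesis to the function ${\mathcal M}_f^{*}g$, and only then uses the pointwise bound --- which holds for the \emph{sharp function} $({\mathcal M}_f^{*}g)^{\#}$, not for ${\mathcal M}_f^{*}g$ itself --- to reach $\|f\|_{(MX)'}\|Mg\|_X$. None of this (linearization, passage to the adjoint, sharp-function estimate for the adjoint, application of FS to ${\mathcal M}_f^{*}g$) appears in your sketch.

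Moreover, your stated ``key observation'' would not close the argument even if verified: if you pair $f$ against a function $\varphi$ that is merely pointwise dominated by $Mg$, then H\"older in $(MX,(MX)')$ gives $\int f\varphi\le \|f\|_{(MX)'}\|M\varphi\|_X\le C\|f\|_{(MX)'}\|MMg\|_X$, and reducing $\|MMg\|_X$ to $\|Mg\|_X$ requires the boundedness of $M$ on $MX$, which is an assumption of Theorem \ref{mr} but not of Theorem \ref{fsc}; pairing instead in $(X,X')$ only reproduces the weaker estimate (\ref{fs2}) of \cite{L2}. So the refinement does not come from a pointwise bound on the paired function, but precisely from the order of operations: H\"older in $(MX,(MX)')$, then the FS inequality applied to the adjoint output, then the sharp-function domination $({\mathcal M}_f^{*}g)^{\#}\le CMg$. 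As written, your proposal is missing the decisive ingredient.
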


\begin{remark}\label{rem}  Theorem \ref{fsc} says that exactly as (\ref{fs1}) can be self-improved to (\ref{fs}), the estimate
(\ref{fs2}) can be self-improved to
$$\|Mf\|_{(MX)'}\le C\|f\|_{(MX)'}.$$
\end{remark}

\begin{proof}[Proof of Theorem \ref{fsc}]
Since $\|f\|_{(MX)'}\le \|f\|_{X'}$, the boundedness of $M$ on $(MX)'$ implies (\ref{fs2}), which in turns implies (\ref{fs})
by a characterization obtained in \cite{L2}.

To show the converse direction, we use a result by A. de la Torre~\cite{T} saying that for every locally integrable $f$, there is a linear operator ${\mathcal M}_f$ such that $Mf$ is pointwise equivalent to ${\mathcal M}_ff$ and for every locally integrable $g$,
$$({\mathcal M}_f^*g)^{\#}(x)\le CMg(x),$$
where ${\mathcal M}_f^*$ is the adjoint of ${\mathcal M}_f$. From this, for every $f,g\ge 0$ we obtain
\begin{eqnarray*}
&&\int_{{\mathbb R}^n}(Mf)g\le C\int_{{\mathbb R}^n}({\mathcal M}_ff)g=C\int_{{\mathbb R}^n}f{\mathcal M}_f^{\star}g\\
&&\le C\|f\|_{(MX)'}\|M{\mathcal M}_f^{\star}g\|_{X}
\le C\|f\|_{(MX)'}\|({\mathcal M}_f^*g)^{\#}\|_{X}\\
&&\le C\|f\|_{(MX)'}\|Mg\|_{X}=C\|f\|_{(MX)'}\|g\|_{MX},
\end{eqnarray*}
which, by duality, implies that $M$ is bounded on $(MX)'$.
\end{proof}

\section{Proof of Theorem \ref{mr}}
As we have mentioned in the Introduction, Theorem \ref{mr} can be deduced by combining the ingredients from Sections 3 and 4 with the result
of D. Rutsky \cite{R}. We give a simplified proof of a weaker version of this result, which is enough for our purposes.
We start with the following lemma.

\begin{lemma}\label{ra}
Assume that  for all $x\in {\mathbb R}^n$,
\begin{equation}\label{condr}
|R_j\f(x)|\le K\f(x)\quad(j=1,\dots,n).
\end{equation}
Then there exists $0<q<1$ depending only on $n$ such that
$$M\f(x)\le c_nKM(\f^q)(x)^{1/q}$$
for all $x\in {\mathbb R}^n$.
\end{lemma}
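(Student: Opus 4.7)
First, I would reduce the pointwise inequality to a reverse-H\"older-type estimate on cubes. For any cube $Q$ containing $x$, one has $M(\f^q)(x)^{1/q}\ge\bigl(\frac{1}{|Q|}\int_Q\f^q\bigr)^{1/q}$, so it suffices to prove that for some $q=q(n)\in(0,1)$ and every cube $Q$,
$$
\frac{1}{|Q|}\int_Q\f\le c_nK\left(\frac{1}{|Q|}\int_Q\f^q\right)^{1/q};
$$
the lemma then follows by taking the supremum over cubes $Q\ni x$. This reverse H\"older is a quantitative $A_\infty$-condition for $\f$ with constants controlled by $n$ and $K$. By Chebyshev's inequality it is enough to establish the non-concentration statement: for some $c=c(n)\in(0,1)$ and every cube $Q$,
$$
\Bigl|\Bigl\{x\in Q:\f(x)\ge\tfrac{c}{K}\cdot\tfrac{1}{|Q|}\int_Q\f\Bigr\}\Bigr|\ge c|Q|.
$$

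To establish this non-concentration, I would exploit the identity $\f=-\sum_{j=1}^nR_j(R_j\f)$ (from $\sum_jR_j^2=-I$) combined with the hypothesis $|R_j\f|\le K\f$. Setting $h_j=R_j\f$, so that $|h_j|\le K\f$, the function $\f=-\sum_jR_jh_j$ is a finite sum of Riesz transforms of functions pointwise bounded by $K\f$. Suppose for contradiction $\f$ were concentrated on a thin set $E\subset Q$ with $|E|<c|Q|$ and $\int_E\f\ge(1-c)\int_Q\f$; then on the large complementary set $Q\setminus E$ the hypothesis would force $|R_j\f|\le K\f$ to be small. On the other hand, localizing $h_j=h_j\chi_{3Q}+h_j\chi_{(3Q)^c}$ and applying the weak-$(1,1)$ bound of the Riesz transforms to the restriction of $h_j$ to the set where $\f$ is concentrated should produce a subset of nontrivial measure in $Q\setminus E$ on which $|R_jh_j|$ is large; through the identity $\f=-\sum_jR_jh_j$ this would yield a contradiction once $c$ is chosen small enough, depending only on $n$.

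The main technical obstacle is the tail contribution $R_j(h_j\chi_{(3Q)^c})$, which by the smoothness of the Riesz kernel is controlled on $Q$ by $c_nK\sum_{k\ge 1}2^{-kn}\,\frac{1}{|2^kQ|}\int_{2^kQ}\f$, an $M\f$-type quantity. Closing the estimate requires absorbing this tail without circular dependence. I would handle it by a bootstrap / stopping-time argument: first establish the non-concentration property for cubes on which the tail is dominated by $\frac{1}{|Q|}\int_Q\f$, then extend to general cubes via a Calder\'on--Zygmund-type decomposition with respect to the averages of $\f$. The final value $q<1$ arises from applying Kolmogorov's inequality to the weak-$(1,1)$-bounded Riesz transforms and depends only on $n$.
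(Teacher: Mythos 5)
Your opening reduction is sound: a uniform reverse H\"older estimate $\frac{1}{|Q|}\int_Q\f\le c_nK\bigl(\frac{1}{|Q|}\int_Q\f^q\bigr)^{1/q}$ over all cubes would give the pointwise bound, and a quantitative non-concentration estimate would give that. The gap is in how you propose to prove non-concentration. A weak-$(1,1)$ (or Kolmogorov) bound for $R_j$ only controls from above the measure of the set where $|R_j(h_j\chi_{3Q})|$ exceeds a threshold; it cannot ``produce a subset of nontrivial measure in $Q\setminus E$ on which $|R_jh_j|$ is large,'' which is the lower bound your contradiction needs. Moreover, the hypothesis supplies only the upper bound $|h_j|=|R_j\f|\le K\f$: the $h_j$ are signed, nothing prevents them from being tiny on $E$, so concentration of $\f$ on $E$ gives no lower bound on the mass of $h_j$ there, and even for a nonnegative density the vector field $\sum_j R_j(\,\cdot\,\chi_E)$ can be small on large portions of $Q\setminus E$ by cancellation. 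Thus the core implication ``$|R_j\f|\le K\f$ forces quantitative non-degeneracy of $\f$ on every cube'' is asserted rather than proved, and the remaining tail/absorption step (your ``bootstrap / stopping-time argument'') is precisely where the circularity you acknowledge sits; it is not resolved by the sketch.

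For comparison, the paper does not argue by real-variable means at all: it sets $u_j=P_t*R_j\f$, $u_{n+1}=P_t*\f$, so that $F=(u_1,\dots,u_{n+1})$ is a Stein--Weiss conjugate harmonic system, uses the classical fact that $|F|^q$ is subharmonic for $q\ge\frac{n-1}{n}$ to get $|F(x,t)|^q\le(|F(\cdot,0)|^q*P_t)(x)$, and then invokes (\ref{condr}) to bound $|F(\cdot,0)|\le(n+1)^{1/2}K\f$, which yields $(\f*P_t)(x)\le c_nK\bigl((\f^q)*P_t(x)\bigr)^{1/q}$; taking the supremum over $t>0$ and using the comparability of the Poisson maximal function with $M$ finishes the proof. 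Any repair of your approach would essentially have to recreate such a lower-bound mechanism (subharmonicity, or in one dimension the Cotlar--Sadosky identity mentioned in Remark \ref{flr}); I am not aware of a purely real-variable substitute based on weak-type bounds.
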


\begin{remark}\label{flr} A conclusion of this lemma can be refined. In fact, it was shown by I. Vasilyev \cite{V} that
(\ref{condr}) implies $\log \f\in BMO$. Moreover, D. Rutsky \cite{R} observed that (\ref{condr}) implies $\f\in A_{\infty}$.

We also note that M. Cotlar and C. Sadosky \cite{CS} showed that (\ref{condr}) for the Hilbert transform on the unit circle implies $\f\in A_2$. 
We give a simple proof of this fact for the Hilbert transform $H$ on the real line. 

Assume that $|H\f|\le K\f$. Applying the ``magic identity" $(Hf)^2=f^2+2H(fHf)$, we obtain
\begin{equation}\label{mag}
\int_{{\mathbb R}}(Hf)^2\f=\int_{{\mathbb R}}f^2\f+2\int_{{\mathbb R}}H(fHf)\f.
\end{equation}
Further,
\begin{eqnarray*}
\Big|\int_{{\mathbb R}}H(fHf)\f\Big|=\Big|\int_{{\mathbb R}}f(Hf)H(\f)\Big|&\le& K\int_{{\mathbb R}}|f||Hf|\f\\
&\le&K\|f\|_{L^2(\f)}\|Hf\|_{L^2(\f)}.
\end{eqnarray*}
Combining this with (\ref{mag}) yields
$$\|Hf\|_{L^2(\f)}\le C_K\|f\|_{L^2(\f)},$$
which implies $\f\in A_2$.
\end{remark}

\begin{proof}[Proof of Lemma \ref{ra}] Let $P_t$ be the Poisson kernel. Denote
$$u_j(x,t)=(R_j(\f)*P_t)(x), j=1,\dots,n,\quad u_{n+1}(x,t)=(\f*P_t)(x)$$
and $F=(u_1,\dots,u_{n+1})$. It is well known \cite[p. 143]{G} that $|F|^q$ is subharmonic when $q\ge \frac{n-1}{n}$.
In particular, this implies (see \cite[p. 145]{G}) that for all $x\in {\mathbb R}^n$ and $t,\e>0$,
$$|F(x,t+\e)|^q\le (|F(\cdot,\e)|^q*P_t)(x).$$
Passing to the limit when $\e\to 0$ and applying (\ref{condr}), we obtain
$$|\f*P_t(x)|^q\le |F(x,t)|^q\le K^q(n+1)^{q/2}(\f^q)*P_t(x).$$
Taking here the supremum over $t>0$ completes the proof.
\end{proof}

The following lemma is the above mentioned weaker version of the result in \cite{R}.

\begin{lemma}\label{sr}
Let $X$ be a BFS. Assume that the maximal operator $M$ and that every Riesz transform $R_j, j=1,\dots, n$ are bounded on $X$. Then $M$ is bounded on $X'$.
\end{lemma}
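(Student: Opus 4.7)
The strategy is to prove $M : X' \to X'$ via duality, combining a Rubio de Francia envelope with Lemma~\ref{ra}. Since each $R_j$ satisfies $R_j^\ast = -R_j$, the hypothesis implies that $R_j$ is bounded on $X'$ with the same norm as on $X$. To show $\|Mh\|_{X'} \leq C\|h\|_{X'}$ for nonnegative $h \in X'$, it is equivalent by duality to bound $\int (Mh)g \leq C\|h\|_{X'}\|g\|_X$ uniformly in nonnegative $g \in X$. Given such $g$, apply the Rubio de Francia iteration to $S = M + \sum_{j=1}^n |R_j|$ (bounded on $X$ by the assumptions) to define
$$\varphi = \sum_{k=0}^\infty (2\|S\|_X)^{-k} S^k g,$$
which satisfies $\varphi \geq g$, $\|\varphi\|_X \leq 2\|g\|_X$, and $S\varphi \leq K\varphi$ with $K = 2\|S\|_X$. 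In particular, $M\varphi \leq K\varphi$ and $|R_j\varphi| \leq K\varphi$ for every $j$.

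The Riesz-transform estimates trigger Lemma~\ref{ra}: there exists $q = q(n) \in (0,1)$ such that $M\varphi \leq c_n K (M\varphi^q)^{1/q}$. Raising to the $q$-th power and combining with Jensen's inequality (valid since $q < 1$), the quantities $(M\varphi)^q$ and $M\varphi^q$ are pointwise comparable; together with $M\varphi \leq K\varphi$ this yields $\varphi^q \in A_1$ with an explicit constant. Moreover, by the reverse H\"older self-improvement of $A_1$ weights, $\varphi^{1+\eta} \in A_1$ for some $\eta > 0$ depending only on $K$, so $\varphi \in A_p$ for every $p > 1$ with quantitative control.

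The final task is to bound $\int (Mh)\varphi$ by $C\|h\|_{X'}\|\varphi\|_X$. The $L^1(\varphi)$ route fails since $M$ is not bounded on $L^1(\varphi)$ even for $\varphi \in A_1$; however, for $p$ slightly larger than $1$ we have $M : L^p(\varphi) \to L^p(\varphi)$ (as $\varphi \in A_1 \subset A_p$). My plan is to apply weighted H\"older's inequality with exponents $p, p'$, using this $L^p(\varphi)$-boundedness to absorb the $Mh$ factor and the reverse H\"older inequality to recombine the residual $\varphi$ factors into $\|\varphi\|_X$, yielding $\int (Mh)\varphi \leq C\int h\varphi \leq C\|h\|_{X'}\|\varphi\|_X \leq 2C\|h\|_{X'}\|g\|_X$. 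The main obstacle lies in executing this last step cleanly---balancing the H\"older exponents while retaining the correct $X$-norm control---and the refined reverse-H\"older room provided by Lemma~\ref{ra} is crucial for bridging the gap that standard $A_1$-theory leaves open at the endpoint $p = 1$.
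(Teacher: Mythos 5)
There is a genuine gap, and it is structural: you have built the Rubio de Francia majorant on the wrong side of the duality. You majorize the test function $g\in X$ by $\varphi$ with $M\varphi\le K\varphi$, and then need $\int (Mh)\varphi\le C\int h\varphi$ for \emph{arbitrary} $h\in X'$. That inequality is precisely the boundedness of $M$ on $L^1(\varphi)$, which (as you yourself note) is false: in fact it fails for every nontrivial weight, since testing on approximate point masses forces $\int \varphi(x)|x-x_0|^{-n}dx\lesssim \varphi(x_0)$, and the local part of that integral already diverges at every Lebesgue point where $\varphi(x_0)>0$. The proposed repair --- H\"older with exponents $p,p'$ and $M:L^p(\varphi)\to L^p(\varphi)$ for $\varphi\in A_p$ --- cannot bridge this, because $h$ is an arbitrary element of $X'$ with no $L^p(\varphi)$ control whatsoever, and no amount of reverse H\"older improvement of $\varphi$ converts a weighted $L^p$ bound for $Mh$ into the bilinear bound $\int(Mh)\varphi\lesssim\|h\|_{X'}\|\varphi\|_X$. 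So the argument stops exactly at the step you flag as ``the main obstacle''; that obstacle is not a technicality but an impossible endpoint inequality. (Also, the detour through Lemma~\ref{ra} and $\varphi^q\in A_1$ is idle in your setup: $M\varphi\le K\varphi$ already says $\varphi\in A_1$, and none of this is what is missing.)

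The paper's proof runs the same tools in the opposite direction. By duality the $R_j$ are bounded on $X'$, and the Rubio de Francia iteration is performed on $X'$, applied to the function $f$ whose maximal function one wants to control, using only $R=\sum_j|R_j|$ (one cannot put $M$ into the iteration there, since boundedness of $M$ on $X'$ is the conclusion). This produces $S_Rf\ge |f|$, $\|S_Rf\|_{X'}\le 2\|f\|_{X'}$, with $R(S_Rf)\le 2\gamma S_Rf$; Lemma~\ref{ra} then upgrades the Riesz-transform self-bound to the pointwise estimate $M(S_Rf)\le c_n\gamma\, M\big((S_Rf)^q\big)^{1/q}$ with $q<1$. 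The duality is closed by the classical Fefferman--Stein inequality $\|Mu\|_{L^{p}(g)}\le C\|u\|_{L^{p}(Mg)}$ with $p=1/q>1$ (applied with $u=(S_Rf)^q$ and weight $g$), giving $\int M\big((S_Rf)^q\big)^{1/q}g\le C\int (S_Rf)\,Mg\le C\|S_Rf\|_{X'}\|Mg\|_X$, and finally the boundedness of $M$ on $X$ yields $\int (Mf)g\le C\|f\|_{X'}\|g\|_X$. Note that the sublinear gain $q<1$ is what allows one to use the $p>1$ Fefferman--Stein inequality and thereby avoid the false $p=1$ endpoint that your arrangement requires. If you reorganize your argument along these lines --- majorize $f\in X'$, not $g\in X$ --- the pieces you already have (duality for $R_j$, the iteration, Lemma~\ref{ra}) do assemble into a proof.
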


\begin{proof}
By duality, the Riesz transforms $R_j$ are bounded on $X'$.
Denote
$$Rf(x)=\sum_{j=1}^n|R_jf(x)|,$$
and consider the following Rubio de Francia type operator
$$S_{R}f(x)=\sum_{k=0}^{\infty}\frac{1}{(2\ga)^k}\f_k(x),$$
where $\ga=\|R\|_{X'\to X'}, \f_0=|f|$ and $\f_k=R(\f_{k-1}), k\in {\mathbb N}$. Then
\begin{equation}\label{rdf}
R(S_Rf)(x)\le 2\ga S_Rf(x)
\end{equation}
and also
\begin{equation}\label{rdf1}
|f|\le S_{R}f \quad\text{and}\quad \|S_{R}f\|_{X'}\le 2\|f\|_{X'}.
\end{equation}

Combining (\ref{rdf}) with Lemma \ref{ra} yields
$$M(S_Rf)(x)\le c_n\ga M((S_Rf)^q)^{1/q}(x),$$
where $q=q(n)<1$. Therefore, by the Fefferman-Stein inequality \cite{FS1} along with (\ref{rdf1}),
\begin{eqnarray*}
\int_{{\mathbb R}^n}(Mf)g\,dx&\le& \int_{{\mathbb R}^n}(M(S_Rf))g\,dx\le c_n\ga \int_{{\mathbb R}^n}M((S_Rf)^q)^{1/q}g\,dx\\
&\le& c_n'\ga\int_{{\mathbb R}^n}(S_Rf)(Mg)dx\le c_n'\ga\|S_Rf\|_{X'}\|Mg\|_{X}\\
&\le&C\|f\|_{X'}\|g\|_{X},
\end{eqnarray*}
which implies that $M$ is bounded on $X'$.
\end{proof}

\begin{proof}[Proof of Theorem \ref{mr}]
The implication $(\rm{i})\Rightarrow(\rm{ii})$ follows from (\ref{pot}) and from the assumption that $M$ is bounded on $MX$.

Turn to the implication $(\rm{ii})\Rightarrow(\rm{i})$. Applying Lemma \ref{pt} and using the boundedness of $M$ on $MX$, we obtain
$$\|M(R_j^{\star}f)\|_{X}\le C\|MMf\|_{X}+\|R_j^{\star}f\|_{X}\le C\|Mf\|_{X}\quad(j=1,\dots,n).$$
This means that every $R_{j}^{\star}$ is bounded on $MX$, and therefore $R_{j}$ is bounded on $MX$ as well.
By Lemma \ref{sr} we obtain that $M$ is bounded on $(MX)'$, which, by Theorem \ref{fsc}, is equivalent to that the Fefferman-Stein inequality holds on $X$.
\end{proof}

\end{document}